\newcommandx{\note}[2][1=]{\todo[linecolor=yellow,backgroundcolor=yellow!25,bordercolor=yellow,#1]{#2}}
\newcommand{\rb}[1]{\left( #1 \right)}
\newcommand{\set}[1]{ \left\{ #1 \right\} }
\renewcommand{\a}{\ensuremath{\mathcal{A}}}
\renewcommand{\b}{\ensuremath{\mathcal{B}}}
\newcommand{\mcc}{\ensuremath{\mathcal{C}}}
\newcommand{\ee}{\mathbb{E}}
\newcommand{\g}{\ensuremath{\mathcal{G}}}
\newcommand{\n}{\mathcal{N}}
\newcommand{\nn}{\mathbb{N}}
\newcommand{\pp}{\mathbb{P}}
\newcommand{\rr}{\mathbb{R}}
\newcommand{\s}{\mathcal{S}}
\renewcommand{\t}{\mathcal{T}}
\newcommand{\x}{\mathcal{X}}
\newcommand{\zz}{\mathbb{Z}}
\newcommand{\nonnegint}{\zz_{\geq 0}}
\newcommand{\polring}[2]{\rr[#1_1,\ldots,#1_{#2}]}
\newcommand{\wh}[1]{\widehat{#1}}
\newcommand{\init}[2]{\text{in}_{#1}\left(#2\right)}
\newcommand{\ip}[2]{\ensuremath{(IP)_{#1, #2} }}
\newcommand{\argmin}{\operatorname{argmin}}
\newtheorem{thm}{Theorem}[section]
\newtheorem{prop}[thm]{Proposition}
\newtheorem{lem}[thm]{Lemma}
\theoremstyle{remark}
\newtheorem{defn}[thm]{Definition}
\newtheorem{example}[thm]{Example}
\title{Gröbner and Graver bases for calculating Opportunity Cost Matrices}
\author[Y.~Ge]{Yuchen Ge}
\address{Yuchen Ge, Shandong University, Jinan, China}
\email{gycdwwd@163.com}
\urladdr{https://gycdwwd.github.io/}
\thanks{This work was conceived while YG was a research intern in Montreal with financial support from the MITACS Globalink programme. The authors are grateful to MITACS for this support.}
\author[J.~Ortmann]{Janosch Ortmann}
\address{Janosch Ortmann\\ CRM, GERAD and Universit\'e du Qu\'ebec \`a Montr\'eal\\ Case postale 8888, succ. Centre-ville\\ Montr\'eal (QC) H3C 3P8\\ Canada}
\email{ortmann.janosch@uqam.ca}
\urladdr{http://crm.umontreal.ca/~ortmann/}
\thanks{JO would like to thank Rüdiger Schultz for valuable discussions on the subject of the article. He gratefully acknowledges the support provided by an NSERC discovery grant}
\author[W.~Rei]{Walter Rei}
\address{Walter Rei\\ CIRRELT and Universit\'e du Qu\'ebec \`a Montr\'eal\\ Case postale 8888, succ. Centre-ville\\ Montr\'eal (QC) H3C 3P8\\ Canada}
\email{rei.walter@uqam.ca}
\thanks{While working on this paper, the third author was the Canada Research Chair (CRC) in Stochastic Optimization of Transport and Logistics Systems.  In addition, he was supported by the National Sciences and Engineering Research Council of Canada (NSERC) through the discovery grants program. He gratefully acknowledges all the support that is provided by the CRC and NSERC programs.}
\subjclass[2000]{90C15 (primary) 90C10, 13P25 (secondary)}
\begin{document}
\begin{abstract}
	Opportunity cost matrices, first introduced in \cite{HewittOrtmannRei2022}, are interesting in the context of scenario reduction. We provide new algorithms, based on ideas from algebraic geometry, to efficiently compute the opportunity cost matrix using computational algebraic geometry. We demonstrate the efficacy of our algorithms by computing opportunity cost matrices for two stochastic integer prorams.
\end{abstract}
	
\maketitle

\section{Introduction}

In many real-world applications, decision-making under uncertainty is a challenging problem that requires the consideration of multiple possible scenarios. However, the number of scenarios can grow exponentially with the number of uncertain parameters, making the decision problem intractable. Scenario reduction methods
aim to address this issue by reducing the number of scenarios while preserving the key characteristics of the original problem.

Recently \cite{KeutchayanOrtmannRei21, BertsimasMundru2022, HewittOrtmannRei2022} there has been progress in developing problem-driven scenario reduction methods. All of these methods require the computation of a variant of the \emph{opportunity cost matrix} introduced in \cite{HewittOrtmannRei2022}: assign a decision $x^\ast_j$ to each scenario $s_j$. In \cite{KeutchayanOrtmannRei21, HewittOrtmannRei2022}  this is done by choosing an optimal solution to the single-scenario problem. The opportunity cost matrix is computed by evaluating the solution associated to scenario $j$ against scenario $i$. Intuitively, suppose that we hae an oracle that predicts that scenario $j$ will occur. The $(i,j)$ entry of the opportunity cost matrix gives the cost incurred by trusting the oracle and taking a decision under the viewpoint of scenario $j$ if it then turns out that in fact scenario $i$ occurs.

The opportunity cost matrix naturally leads to a decision-based distance on the scenario set: two scenarios $i$ and $j$ can be considered to be close if the $(i,j)$ entry of the opportunity cost matrix is small (and hence the cost of predicting scenario $j$ when actually scenario $i$ occurs leads to a small cost) and far away if the $(i,j)$ entry is large.
 \cite{KeutchayanOrtmannRei21,HewittOrtmannRei2022} propose clustering algorithms based on this distance, which has shown to lead to good approximative solutions and tight bounds on the true objective value.

In this paper, we present a novel approach to efficiently compute opportunity cost matrices using Gröbner and Graver bases. Gröbner \cite{Buchberger1985} and Graver \cite{Graver1975} bases  are algebraic objects that can be used to solve systems of polynomial equations and inequalities. These have been successfully applied to optimisation \cite{Schultz1995,SchultzStougieVanderVlerk1998} 
and have seen a wide range of applications 

Our approach leverages the structure of opportunity cost matrices to formulate them as polynomial systems, which can then be solved using Gröbner and Graver bases. The resulting method is fast, accurate, and scales with the number of scenarios and variables.

We demonstrate the effectiveness of our approach through numerical experiments on a well-known integer program from \cite{HemmeckeSchultz2003}. Our results show that the Gröbner basis approach is particularly efficient for a large number of scenarios, whereas the Graver basis approach stays stable with a large number of decision variables.
Our paper contributes to the literature by providing two new approaches to computing opportunity cost matrices. We also prove theoretical results about Graver bases for deterministic and stochastic integer programs.

The rest of the paper is organized as follows. In Section \ref{sec:opp-cost}, we provide background on opportunity cost matrices and their usefulness. In Section \ref{sec:maths}, we introduce the Gröbner and Graver bases and present our mathematical results. Section \ref{sec:algos} describes the algorithms that we then apply to generate our numerical results in Section \ref{sec:results}. Finally, Section \ref{sec:conclusion} concludes.

\subsection*{Notation} Throughout this paper, $\zz$ denotes the set of integers and $\nonnegint$ the subset of non-negative integers.

\section{Opportunity cost matrices}
\label{sec:opp-cost}

Consider the following two-stage stochastic optimisation problem
\begin{align}
	\label{eq:TrueStochOptProblem}
	&	\min_{x\in\x} \ee F(x,\xi) =  \min_{x\in\x} \gamma^\top x + \ee Q(x,\xi) 
	\intertext{where $\gamma\in\rr^d$ and $\x$ is the intersection of the integer lattice with a convex polyhedron. The random part $Q$ of the objective function is given by}
	Q(x,\xi) & =  \min\set{ c_\xi^\top y \colon  T x + A y = h_\xi  \colon y\in\zz^d}
\end{align}
where $c_\xi\in\zz^d $  and $b_\xi\in\zz^k$ for all $\xi$ and $T$ and $A$ are integer matrices of the required dimension. 
Observe that we assume $T$ and $A$ to be deterministic matrices, i.e. they are not random.

In general, it is not obvious how to obtain an explicit solution to \eqref{eq:TrueStochOptProblem}. The scenario approach in stochastic programming \cite{BirgeLouveaux} consists of approximating the probability measure $\pp$ on $\xi$ that gives rise to the expectation operator $\ee$ by a discrete measure assigning probabilities\footnote{i.e. $p_n\geq 0$ for all $n$ and $\sum_{n=1}^N p_n=1$} $p_1,...,p_N$ on a finite set of outcomes $\xi_1,...,\xi_N$(often called \emph{scenarios}) for $\xi$. The set $\s=\set{\xi_1,\ldots,\xi_N}$ is then referred to as the \emph{scenario set}.

For example, this can be achieved by the \emph{sample average approximation} \cite{KleywegtShapiroHdM2002}, where the $\xi_i$ are independent samples from $\pp$ and are each assigned the same probability $\nicefrac1N$ of occurring. In any case, the scenario approach yields optimisation problems of the form
\begin{align}
	\label{eq:SIP}
	\min \set{\gamma ^{\top} x+\sum_{i=1}^{N} p_{i} c_{i}^{\top} y_{i}: x\in\x, A y_{i}=h_{i}-T x, y_{i}  \in \mathbb{Z}_{+}^{n}\ \forall\, i}.
\end{align}
In order to properly model the incertainty represented by $\pp$, often a large number of scenarios must be generated. On the other hand solving problems of the form \ref{eq:SIP} when $N$ is large can be very computationally challenging. This motivates the idea of \emph{scenario reduction}: can one find a small subset $\wh\s$ of the scenario set $\s=\set{\xi_1,\ldots,\xi_N}$ such that replacing $\s$ by $\wh\s$ in \eqref{eq:SIP} only incurs a small approximation error. 

Generally speaking, scenario reduction methods can be separated into \emph{distribution-based} and  \emph{problem-based} methods. The former seek to minimise the difference (defined in some suitable sense) between the empirical measure of the subset and the original scenario set. See for example \cite{DupacovaGKRomisch2003, Romisch-conference2009, HenrionKuchlerRomisch2009} for examples of this approach. On the other hand, problem-based methods \cite{BertsimasMundru2022, KeutchayanOrtmannRei21, HewittOrtmannRei2022} look to minimise the difference between the solutions associated to the larger and smaller problems, in other words scenarios are considered to be similar if they lead to similar decisions according to \eqref{eq:SIP}. In \cite{HewittOrtmannRei2022}, the opportunity cost matrix was introduced. 
%
%
%
%
%

\begin{defn}
	\label{def:opp-cost-mx}
	Consider a set of feasible decisions $x_1,...,x_N\in \x$ such that decision $x_j$ is associated to scenario $s_j$. The \emph{opportunity cost matrix} is the $N\times N$ matrix \a\ whose $(i,j)$ entry is $\a_{ij}=F(x_i,\xi_j)$.
\end{defn}

Clearly there are many interesting choices for the solutions $x_j$. 

\begin{example}
In \cite{HewittOrtmannRei2022} and \cite{KeutchayanOrtmannRei21}, the decisions $x_n$ were chosen to be the single-scenario solutions of \eqref{eq:TrueStochOptProblem}, that is
\begin{align}
	x_j\in\argmin F(x,\xi_j),
\end{align}
\end{example}

In this paper, we are looking for efficient methods of computing the matrix $\a$ in the context of the stochastic linear two-stage problem defined in \eqref{eq:TrueStochOptProblem}.
Since the functions $Q$ and $F$ only differ by the linear term $\gamma\cdot x$, we turn our attention to efficiently computing 
\begin{align}
	\label{eq:OppCostQ}
	Q\rb{x_i,\xi_j} & = \min\set{c_j^\top y \colon A y = b_{ij}},
\end{align}
where $c_j=c(\xi_j)$, $b_{ij}=b(x_i,\xi_j)$. Here, we write $ b(x,\xi)=h_\xi-T_\xi x$.
Since we had assumed $A$ to be deterministic deterministic, the feasible region on the right-hand side of \eqref{eq:OppCostQ} changes only through $b_{ij}$ as $i$ and $j$ vary.

\section{Gröbner and Graver bases}

\label{sec:maths}

In this section we develop the  theory of Gröbner and Graver bases that we require in order to justify and motivate the algorithms that will be described in Section \ref{sec:algos}.

In the previous section we saw that in order to compute the opportunity cost matrix, we are looking to solve integer optimisation problems of the form
\begin{align}
	\label{eq:generalIP}
	\ip cb: \quad \min \set{c^{\top} z: A z=b, z \in \nn^{d}}
\end{align}
for a given integer matrix $A$ but with varying cost vectors $c\in\mcc$ and constraint vectors $b\in\b$. Here, $\mcc$ and $\b$ are finite sets corresponding to the values that the $c(\xi_m)$ and $b(x,\xi_m)$ of \eqref{eq:OppCostQ} can take. Since we consider the matrix $A$ to be fixed throughout, we do not include it in the notation.

Note that the optimal solution to \eqref{eq:generalIP} may not be unique, which is sometimes inconvenient. Whenever required, we can therefore replace \eqref{eq:generalIP} by its refinement \ip{>_c}b\ that we will define now. We begin my defining a total order\footnote{A \emph{total order} on a set $S$ is a binary relation $\prec$ on $S$ such that $a\prec b$ or $b\prec a$ for all $a,b\in S$ with $a\ne b$.}  on $\nonnegint^d$.

\begin{defn}
	\label{def:OrderC}
	Let $>$ be any total order on $\nonnegint^n$ and fix $c\in\zz^d$. We define another order relation on $\nn^n$ by saying that $x>_c$ y  and only if
	\begin{enumerate}
		\item $c\cdot x>c\cdot y$ or
		\item 	$c\cdot x=c\cdot y$ and $x>y$.
	\end{enumerate}
\end{defn}

Unless otherwise specified, the total order $\prec$ in Definition \ref{def:OrderC} will be the lexicographical order, under which $x>y$ if and only if the leftmost non-zero entry of $x-y$ is positive.

\begin{defn}
	A  set $\t_{c} \subseteq \zz^{d}$  is called a \emph{test set} for the family of problems  $\set{\ip cb\colon b \in \mathbb{R}^{l}}$ if
	\begin{enumerate}
		\item $c\cdot t>0$  for all  $t \in \mathcal{T}_{c}$
		\item for every  $b \in \mathbb{R}^{l}$  and for every non-optimal feasible solution  $z_{0} \in \mathbb{Z}_{+}^{d}$  to  $A z=b$ , there exists a vector  $t \in \mathcal{T}_{c}$  such that  $z_{0}-t$  is feasible. Such a vector is called an \emph{augmentation vector}.
	\end{enumerate}
\end{defn}

Given a test set  $\mathcal{T}_{c}$, the \emph{augmentation algorithm} (Algorithm \ref{algo:augmentation}) provides a way to compute the optimum of $\ip {>c}b$ for any $b\in \b$. Intuitively, while  there is  $t \in \mathcal{T}_{c}$  with  $c^{\top} t>0$  such that  $z_{0}-t$  is feasible, we repetitively iterate $z_0$ by assigning $z_{0}:=z_{0}-t$.
Thus, we are looking to efficiently construct a test set $\t_c$ for each $c\in\mcc$. Observe that $\t_c\subseteq \ker(A)$ since both $z_0$ and $z_0-t$ are feasible for any $t\in\t_c$ which implies that
\begin{align}
	A t & = A(z_0 - (z_0-t)) = Az_0 - A(z_0-t) = b-b = 0.
\end{align}

In the following we will exhibit two approaches to constructing such a test set. The first uses Gröbner bases and is described in Section \ref{subsec:Grobner}. The second, based on Graver bases, goes one step further and constructs a \emph{universal test set}, that is, a set $\t$ containing a test set $\t_c$ for each cost vector $c\in\mcc$.

\subsection{The Gröbner Basis Approach}
\label{subsec:Grobner}

It is now well known \cite{Schultz1995,SchultzStougieVanderVlerk1998, BertsimasPerakisTayur2000, Thomas-handbook1998}
that Gröbner bases \cite{Buchberger1985, CoxLittleOShea} yield test sets. The \emph{Buchberger algorithm} \cite{Buchberger1985,ContiTraverso1991} takes as input a set of polynomials $f_1,\ldots, f_M$ and returns the reduced Gröbner basis of the ideal generated by the $f_j$. A geometric version of the Buchberger algorithm was given in \cite{Thomas1997}. 
This algorithm is at its most efficient if the generating set is small. We achieve this by using the toric ideal and in particular the methodology of \cite{DiBiaseUrbanke1995}. The advantage is that this can be done once for all cost vectors, which is efficient in the context of computing opportunity cost matrices.

	We will require the following result, which is a direct consequence of Dickson's Lemma \cite{CoxLittleOShea} and can be found as Lemma 2.1.4 of \cite{Thomas1997}.
	
	\begin{lem}
		\label{lem:nonOptimalAlpha}
		Let $A\in\zz^{l\times d}$ and let $\n_{c,b}$ denote the set of non-optimal feasible solutions of $\ip cb$.
		Then there exist $\alpha(t),\ldots,\alpha(t)\in\nn^d$ such that
		\begin{align}
			\bigcup_{b \in \rr^{l}} \n_{c,b}=\bigcup_{i=1}^{t}(\alpha(i)+N^{n})
		\end{align}
	\end{lem}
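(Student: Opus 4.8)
The plan is to show that the set $S:=\bigcup_{b\in\rr^{l}}\n_{c,b}$ is \emph{upward closed} in the partially ordered monoid $(\nn^{d},\le)$, where $\le$ is the coordinatewise order, and then to invoke Dickson's Lemma to conclude that $S$ is a finite union of translated cones $\alpha(i)+\nn^{d}$. Note first that $S\subseteq\nn^{d}$, since every feasible solution of any $\ip cb$ lies in $\nn^{d}$.

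The key step is the following shift argument. Fix $z\in S$, so that $z\in\n_{c,b}$ for $b:=Az$; by definition of non-optimality there is some $z'\in\nn^{d}$ with $Az'=b$ and $c\cdot z'<c\cdot z$. Now take an arbitrary $v\in\nn^{d}$ and put $b':=A(z+v)=b+Av$. Then $z+v$ and $z'+v$ both lie in $\nn^{d}$ and satisfy $A(z+v)=A(z'+v)=b'$, while $c\cdot(z'+v)=c\cdot z'+c\cdot v<c\cdot z+c\cdot v=c\cdot(z+v)$. Hence $z+v$ is a non-optimal feasible solution of $\ip c{b'}$, i.e.\ $z+v\in\n_{c,b'}\subseteq S$. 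Since $v\in\nn^{d}$ was arbitrary, this shows $z+\nn^{d}\subseteq S$ for every $z\in S$, which is exactly upward closedness of $S$.

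Finally I would apply Dickson's Lemma \cite{CoxLittleOShea} in the form that $(\nn^{d},\le)$ contains no infinite antichain and is well-founded. Consequently the set of $\le$-minimal elements of $S$, say $\alpha(1),\ldots,\alpha(t)$, is a finite antichain, and by well-foundedness every element of $S$ dominates some $\alpha(i)$. Together with upward closedness (which gives $\alpha(i)+\nn^{d}\subseteq S$ for each $i$) this yields $S=\bigcup_{i=1}^{t}(\alpha(i)+\nn^{d})$, as claimed; if $S=\emptyset$ one simply takes $t=0$. There is no real obstacle in the argument: the only point that needs care is checking in the second paragraph that the witness $z'$ of non-optimality of $z$ survives translation by the same $v$, and the remainder is the standard monomial-ideal consequence of Dickson's Lemma, as in Lemma 2.1.4 of \cite{Thomas1997}.
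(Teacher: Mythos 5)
Your proof is correct and follows exactly the route the paper indicates: it cites the lemma as Lemma 2.1.4 of Thomas (1997), ``a direct consequence of Dickson's Lemma,'' and your shift argument establishing upward closedness of $\bigcup_{b}\n_{c,b}$ followed by extraction of the finitely many $\le$-minimal elements is precisely that standard argument. The only remark worth adding is that if non-optimality is taken with respect to the refined order $>_c$ (ties broken lexicographically), your translation step still goes through, since $(z+v)-(z'+v)=z-z'$ makes the tie-breaking comparison translation-invariant.
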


%
%
	
	The vectors $\alpha(1),\ldots,\alpha(t)$ allow us to compute a test set for the family of integer programs $\set{\ip cb\colon b\in\b}$, via the the following result, which follows from the Gordon-Dickson Lemma and is stated as Corollary 2.1.10 of \cite{Thomas1997}. See also Lemma 2.1 in \cite{Sturmfels2004}.
	
	\begin{thm}
		\label{thm:ThomasTestSet}
		Fix $c\in\zz^d$, let $\alpha(1),\ldots,\alpha(t)$ as in Lemma \ref{lem:nonOptimalAlpha} and let $\beta(i)$ be the (unique) optimum of the program $\ip{>_c}{A \alpha (i)}$. Then the set
		\begin{align}
			\mathcal{G}_{A,c}=\set{(\alpha(i)-\beta(i)),i=1,2,...,t}
		\end{align}
		is a minimal test set for the family of problems $\set{\ip cb\colon b\in\rr^l}$.
	\end{thm}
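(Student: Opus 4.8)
The plan is to verify directly that $\mathcal{G}_{A,c}$ enjoys the two defining properties of a test set for the family $\set{\ip cb\colon b\in\rr^l}$, with Lemma \ref{lem:nonOptimalAlpha} as the only real input, and then to deduce minimality from the way the generators $\alpha(i)$ and $\beta(i)$ were selected. Before starting I would record two preliminary remarks: each $\beta(i)$ is unique because $>_c$ is a total order on $\nn^d$ (its existence being part of the hypothesis, and in any case guaranteed once the programs in question are bounded), and $A\beta(i)=A\alpha(i)$ because $\beta(i)$ is feasible for $\ip{>_c}{A\alpha(i)}$. Note also that $\mathcal{G}_{A,c}\subseteq\ker A$ has already been observed in the text, so properties (1) and (2) of a test set are all that is needed.

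For property (1), I would show $c\cdot\bigl(\alpha(i)-\beta(i)\bigr)>0$ for every $i$. Since $\alpha(i)\in\alpha(i)+\nn^d$, Lemma \ref{lem:nonOptimalAlpha} places $\alpha(i)$ in $\bigcup_b\n_{c,b}$; as $\alpha(i)$ is feasible for $\ip cb$ only when $b=A\alpha(i)$, this says exactly that $\alpha(i)$ is a \emph{non-optimal} feasible solution of $\ip c{A\alpha(i)}$. Hence its cost strictly exceeds the optimal value of that program, a value attained at $\beta(i)$; that is, $c\cdot\beta(i)<c\cdot\alpha(i)$, which is what is claimed.

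For property (2), I would take $b\in\rr^l$ and a non-optimal feasible solution $z_0$ of $\ip cb$, so $z_0\in\n_{c,b}$. Lemma \ref{lem:nonOptimalAlpha} then supplies an index $i$ with $z_0\in\alpha(i)+\nn^d$, say $z_0=\alpha(i)+u$ with $u\in\nn^d$, and I would claim that $t=\alpha(i)-\beta(i)\in\mathcal{G}_{A,c}$ is an augmentation vector for $z_0$. Indeed,
\begin{align}
	z_0-t=(\alpha(i)+u)-(\alpha(i)-\beta(i))=u+\beta(i)\in\nn^d,
\end{align}
and, using $A\beta(i)=A\alpha(i)$,
\begin{align}
	A(z_0-t)=Au+A\beta(i)=Au+A\alpha(i)=Az_0=b,
\end{align}
so $z_0-t$ is again a feasible solution of $\ip cb$. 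Combined with property (1), this establishes that $\mathcal{G}_{A,c}$ is a test set.

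It remains to address minimality, and this is the step I expect to carry the real weight. To see that $\alpha(i_0)-\beta(i_0)$ cannot be removed, I would take $z_0=\alpha(i_0)$, which is non-optimal by the argument for property (1); any augmentation vector for it drawn from $\mathcal{G}_{A,c}$ is some $\alpha(j)-\beta(j)$ with $\alpha(i_0)-\alpha(j)+\beta(j)\in\nn^d$, and the task is to show this forces $j=i_0$. Reducing this to a statement about componentwise domination is exactly where the finer properties of the construction enter: one needs that the $\alpha(i)$ may be, and are, taken to be the \emph{minimal} elements of the upward-closed set $\bigcup_b\n_{c,b}$ (so no $\alpha(i)$ dominates another), and that each $\beta(i)$ is in normal form, meaning it does not lie in any $\alpha(j)+\nn^d$ — equivalently, that $\mathcal{G}_{A,c}$ is the \emph{reduced} Gröbner basis of the toric ideal of $A$ for the term order refining $c$. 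Establishing these two facts is precisely the content of the Gordon–Dickson argument behind Corollary 2.1.10 of \cite{Thomas1997}, which I would cite rather than reprove.
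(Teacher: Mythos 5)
The paper does not actually prove this theorem: it imports it wholesale as Corollary 2.1.10 of \cite{Thomas1997}, so there is no in-text argument to compare yours against. Your verification of the two test-set axioms is correct and self-contained, and it is exactly the standard argument: $\alpha(i)$ lies in $\bigcup_b\n_{c,b}$ and is feasible only in its own fiber, hence is a non-optimal feasible point of $\ip c{A\alpha(i)}$, giving $c\cdot\alpha(i)>c\cdot\beta(i)$; and for a non-optimal $z_0=\alpha(i)+u$ the vector $\alpha(i)-\beta(i)$ visibly augments it since $z_0-t=u+\beta(i)\in\zz_{\geq0}^d$ and $A\beta(i)=A\alpha(i)$. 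This is a genuine addition relative to the paper, which states the result without proof.

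On minimality you correctly identify the crux but then defer it to the same citation, and the reduction you sketch is not quite the right one. Applicability of $t_j=\alpha(j)-\beta(j)$ to $z_0=\alpha(i_0)$ means $\alpha(i_0)\geq t_j^+$, not $\alpha(i_0)\geq\alpha(j)$; these coincide only if $\alpha(j)$ and $\beta(j)$ have disjoint supports, so that $t_j^+=\alpha(j)$. That disjointness is true here but needs its own one-line argument: if $\alpha(j)_k>0$ and $\beta(j)_k>0$, then $\beta(j)-e_k$ is feasible for the fiber of $\alpha(j)-e_k$ with strictly smaller cost, so $\alpha(j)-e_k$ is already non-optimal, contradicting the $\sqsubseteq$-minimality of $\alpha(j)$ among the generators of $\bigcup_b\n_{c,b}$. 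With that in hand, removing $\alpha(i_0)-\beta(i_0)$ leaves $\alpha(i_0)$ (non-optimal) with no applicable augmentation vector, since $\alpha(j)\leq\alpha(i_0)$ forces $j=i_0$ by minimality of the generating set of the monomial ideal. Your appeal to ``$\beta(i)$ in normal form'' (reducedness) is not what is needed for this step; it is the support-disjointness that does the work. Supplying that line would make the proof complete without any reference to \cite{Thomas1997}.
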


	\begin{defn}
		\label{def:GB-matrix}
	The set  $\mathcal{G}_{A,c}$ is called the \emph{Gröbner basis} for the matrix $A$. 
\end{defn}

	While this result gives a formula for the desired test set, it requires computation of the $\alpha(i)$ from Lemma \ref{lem:nonOptimalAlpha} and, crucially, of the $\beta(i)$, which are given in terms of solutions of integer problems itself. A geometric construction of the test set was given in \cite{Thomas1997}.  
	
	As explained above, we need to compute the solutions to $\ip {>_c}b$ as $c$ varies. If we would like to take the Gröbner basis approach, we would need to recompute the Gröbner basis for every cost vector $c\in\mcc$. Therefore, we proceed in a different manner, based on ideas from the theory of polynomial rings.
	
	Let $R=\polring{x}{n}$ denote the ring of polynomials in $n$ variables. We will make frequent use of a close relation between vectors of integers and differences of monomials in $R$. Any $n$-vector of non-negative integers $\alpha=(\alpha_1,\ldots,\alpha_n)\in\nonnegint^n$ can be identified with a monomial $x^\alpha\in R$ defined by 
	\begin{align}
		x^\alpha = x_1^{\alpha_1} \cdots x_n^{\alpha_n} = \prod_{j=1}^{n} x_j^{\alpha_j},
	\end{align}
	and this mapping can be extended to integer-valued vectors as follows. For any  $\alpha=(\alpha_1,\ldots, \alpha_n)\zz^n$, define $\alpha^+=(\alpha_1^+,\ldots,\alpha_n^+)$ and $\alpha^-=(\alpha_1-,\ldots,\alpha_n^-)$, recalling that the positive part of a real number $x$ is defined to be $x^+=\max(x,0)$ and the negative part of $x$ is $x^-=(-x)^+$. Then both $\alpha^+, \alpha^-\in\nonnegint^n$ and $\alpha=\alpha^+-\alpha^-$.  We now define a mapping
	\begin{align}
		\label{eq:defPhi}
		\phi\colon\zz^n\longrightarrow R \quad\text{by}\quad \phi(\alpha) = x^{\alpha^+}- x^{\alpha^-}.
	\end{align} 

	The notion of a Gröbner basis associated to a matrix from Definition \ref{def:GB-matrix} is actually a special case of a more general concept, namely that of a Gröbner basis of a polynomial ideal.  In the following, we give a very brief account of the definition and basic properties of Gröbner bases. For more details see for example Chapter 1 of \cite{Sturmfels-AMS}. 
	
	\begin{defn}
		\label{def:ideal}
		A subset $I\subseteq R$ is said to be an \emph{ideal} of $R$ if it satisfies the following two properties:
		\begin{enumerate}[a)]
			\item For any $x,y\in I$ we have $x-y\in I$ (that is, $I$ is an additive subgroup of $R$),
			\item For any $x\in I$ and $r\in R$ we have $rx \in I$.
		\end{enumerate}
		\end{defn}
	
	\begin{example}
		\label{ex:idealGenerated}
		Given polynomials $f_1,\ldots,f_n$, the \emph{ideal generated by} $f_1,\ldots,f_n$ is the set of all $R$-linear combinations of the $f_j$:
		\begin{align}
			\label{eq:idealGenerated}
			\left\langle f_1,\ldots,f_n\right\rangle & = \set{ \sum_{j=1}^n r_j f_j \colon r_1,\ldots,r_n \in R }.
		\end{align}
	\end{example}
	
	\begin{example}
		\label{ex:toric}
		The \emph{toric ideal} $I_A$ of a matrix $A\in\zz^{d\times n}$ is generated by differences of monomials $x^u-x^v$ such that $u-v$ lies in the kernel of $A$:				
		\begin{align}
			\label{eq:toric}
			I_A & = \left\langle x^u-x^v \colon Au=Av \right\rangle.
		\end{align}
	\end{example}
See Section 4 of \cite{Sturmfels-AMS} for the theory of toric ideals. A polynomial $f\in R$ consists of monomials $x^\alpha$ with \emph{degree} $\alpha\in\nonnegint^n$. Throughout, we fix a total order (recall Definition \ref{def:OrderC} and the discussion surrounding it) $\prec$ on $\nn^n$. In principle, any total order will do, but for us, $\prec$ will always be the total order $<_c$ defined in Definition \ref{def:OrderC}. Given $\prec$, we can compare the different monomials by comparing their degrees. This allows us to define the \emph{initial monomial} of $f$ to be that with the greatest degree (with respect to $\prec$). The initial monomial of $f$ is denoted $\init\prec f$. Given an ideal $I$ of $R$, the \emph{initial ideal} of $I$ is the  ideal generated by the initial monomials of the elements of $I$:
\begin{align}
	\init\prec f & = \left\langle \init\prec f \colon f\in I \right\rangle.
\end{align}
	
	We are now in a position to define Gröbner bases for polynomial ideals. 
	
	\begin{defn}
		\label{def:GB-ideal}
		Let $I$ be an ideal of $R$. A subset $\g$ of $I$ is said to be a \emph{Gröbner basis} of $I$ if the inital ideal of $I$ is generated by the inital monomials of the elements of $\g$:
		\begin{align}
			\label{eq:GB-ideal}
			\init\prec I & = \left\langle\init\prec g \colon g\in\g  \right\rangle.
		\end{align}
	A Gröbner basis $\g$ of $I$ is said to be \emph{minimal} if \eqref{eq:GB-ideal} no longer holds whenever  any one element is removed from 	$\g$. Also, $\g$ is said to be \emph{reduced} if for any distinct $g_1,g_2\in\g$ it holds that no term of $g_1$ is a multiple of $\init \prec {g_2}$.
	\end{defn}

Given an ideal $I$ of $R$ and a total order on $\nn^n$, there is only one reduced Gröbner basis $\g$ such that the coefficients of $\init\prec g$ are equal to $1$ for each $g\in\g$. In other words, reduced Gröbner bases are unique up to multiplying the elements by constants. 
	
	\begin{prop}
		Recall the vectors $\alpha(i)$ and $\beta(i)$ from \ref{thm:ThomasTestSet}. The set of polynomials
		\begin{align}
			\g_{I_A} &= \set{x^{\alpha(j)}-x^{\beta(j)} \colon j\in\set{1,\ldots,t} }
		\end{align}
	is the reduced Gröbner basis of the toric ideal $I_A$ with respect to the ordering $<_c$.
	\end{prop}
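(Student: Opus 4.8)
The plan is to check that $\g_{I_A}=\set{x^{\alpha(j)}-x^{\beta(j)}\colon j\in\set{1,\ldots,t}}$ has the three defining properties of \emph{the} reduced Gröbner basis of $I_A$ for the term order $<_c$ — each element lies in $I_A$ with the expected initial monomial, the initial monomials generate $\init{<_c}{I_A}$, and the basis is reduced with unit leading coefficients — and then to conclude via the uniqueness statement recorded after Definition \ref{def:GB-ideal}.

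First I would work with one binomial $g_j:=x^{\alpha(j)}-x^{\beta(j)}$ at a time. Since $\beta(j)$ is feasible for $\ip{>_c}{A\alpha(j)}$ we have $A\beta(j)=A\alpha(j)$, so writing $\gamma:=\min(\alpha(j),\beta(j))$ coordinatewise one gets $g_j=x^{\gamma}\,\phi(\alpha(j)-\beta(j))$, a monomial multiple of the generator $\phi(\alpha(j)-\beta(j))$ of $I_A$; hence $g_j\in I_A$. For the initial monomial: by Lemma \ref{lem:nonOptimalAlpha}, $\alpha(j)\in\bigcup_b\n_{c,b}$, so $\alpha(j)$ is a non-optimal feasible solution of the program over its own fibre $A\alpha(j)$; since $\beta(j)$ is the \emph{unique} optimum of $\ip{>_c}{A\alpha(j)}$ — in particular the $<_c$-smallest point of that fibre — this forces $\alpha(j)\ne\beta(j)$ and $\alpha(j)>_c\beta(j)$. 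So $\init{<_c}{g_j}=x^{\alpha(j)}$, with coefficient $1$.

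Next I would show $\init{<_c}{I_A}=\left\langle x^{\alpha(1)},\ldots,x^{\alpha(t)}\right\rangle$. The inclusion $\supseteq$ is immediate from the previous step. For $\subseteq$ I would invoke the standard structural fact (Section 4 of \cite{Sturmfels-AMS}) that $I_A$ is spanned as an $\rr$-vector space by the binomials $x^u-x^v$ with $Au=Av$, so that $\init{<_c}{I_A}$ is generated by the initial monomials of such binomials. It then suffices to take one such binomial, say with $u>_c v$: then $Au=Av$ and $u>_c v$ make $u$ a non-optimal feasible solution of $\ip{>_c}{Au}$, so $u\in\bigcup_b\n_{c,b}=\bigcup_{i=1}^t(\alpha(i)+\nn^n)$ by Lemma \ref{lem:nonOptimalAlpha}, i.e.\ $x^{\alpha(i)}\mid x^u$ for some $i$. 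This proves $\g_{I_A}$ is a Gröbner basis of $I_A$ with respect to $<_c$.

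Finally I would verify reducedness, i.e.\ that for distinct $j,k$ no term of $g_j$ is divisible by $x^{\alpha(k)}$, and moreover that $x^{\beta(j)}$ is not divisible by $x^{\alpha(j)}$. If $x^{\alpha(k)}\mid x^{\alpha(j)}$ with $k\ne j$ then $\alpha(j)\in\alpha(k)+\nn^n$, contradicting that the $\alpha(i)$ may be taken as the (unique, by Dickson's Lemma) minimal generators of $\bigcup_b\n_{c,b}$ in Lemma \ref{lem:nonOptimalAlpha}. If $x^{\alpha(k)}\mid x^{\beta(j)}$ for any $k$ (including $k=j$) then $\beta(j)\in\alpha(k)+\nn^n\subseteq\bigcup_b\n_{c,b}$, so $\beta(j)$ is non-optimal for $\ip{>_c}{A\beta(j)}=\ip{>_c}{A\alpha(j)}$, contradicting the definition of $\beta(j)$ as its optimum. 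Hence $\g_{I_A}$ is a reduced Gröbner basis with unit leading coefficients, and therefore equals the reduced Gröbner basis of $I_A$ with respect to $<_c$. The crux is the inclusion $\subseteq$ in the identification of $\init{<_c}{I_A}$: it is precisely there that one must exploit the binomial/lattice structure of $I_A$ to pass from an arbitrary $f\in I_A$ to a single binomial whose initial monomial divides $\init{<_c}f$; the remaining steps are bookkeeping with Lemma \ref{lem:nonOptimalAlpha} and Theorem \ref{thm:ThomasTestSet}. One should also keep in mind that, for a possibly non-generic $c$, the notion of "non-optimal" that makes Lemma \ref{lem:nonOptimalAlpha} compatible with the choice of the $\beta(i)$ in Theorem \ref{thm:ThomasTestSet} — and hence with the argument for $\subseteq$ — is the one associated with the refined program $\ip{>_c}{b}$ rather than with $\ip cb$.
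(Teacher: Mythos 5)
Your proof is correct, and it is worth noting that the paper itself states this proposition without any proof, implicitly deferring to \cite{Thomas1997} and Chapter~4 of \cite{Sturmfels-AMS}; so what you have written is a self-contained verification along the standard route (membership of each binomial in $I_A$ with initial monomial $x^{\alpha(j)}$, identification of $\init{<_c}{I_A}$ with $\left\langle x^{\alpha(1)},\ldots,x^{\alpha(t)}\right\rangle$ via Lemma~\ref{lem:nonOptimalAlpha}, reducedness, then uniqueness of the reduced basis), which is essentially how the cited sources argue. Two remarks. First, the step you rightly call the crux rests on the assertion that $\init{<_c}{I_A}$ is generated by initial monomials of binomials of $I_A$; the one-line inference ``spanned by binomials, hence the initial ideal is generated by their initial monomials'' is not valid for an arbitrary spanning set, since leading terms could a priori cancel in a linear combination. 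It does hold here because $I_A$ is homogeneous for the multigrading $\deg x^u=Au$, so one may pass to a single graded piece (a single fibre), where a nonzero element must involve at least two monomials $x^u,x^v$ with $Au=Av$ (evaluate at $(1,\ldots,1)$), giving the required binomial; alternatively one quotes that Buchberger's algorithm run on binomial generators returns a binomial Gröbner basis. Since you cite \cite{Sturmfels-AMS} for exactly this fact, this is a citation rather than a gap, but your stated justification is not by itself a proof of it. Second, you correctly identify the two readings needed for the proposition to be true as stated: the $\alpha(i)$ of Lemma~\ref{lem:nonOptimalAlpha} must be taken as the unique minimal generators supplied by Dickson's Lemma (otherwise reducedness can fail), and ``non-optimal'' must be understood relative to the refined order $>_c$ when $c$ has ties; both readings agree with \cite{Thomas1997}, and making them explicit sharpens the paper's own statement.
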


	We conclude that the two notions of Gröbner basis (of a matrix on the one hand and of a polynomial ideal on the other) are actually the same: they are related via the map $\phi$ defined in \eqref{eq:defPhi}. In fact, the original Buchberger algorithm \cite{Buchberger1985} is formulated in this setting.
	
In \cite{DiBiaseUrbanke1995}, an efficient algorithm to compute a small generating set of the toric ideal $I_A$, see Algorithm \ref{algo:genset-toric}. Observe that the toric ideal only depends on the matrix $A$, and not on the cost vector $c$. This motivates our \emph{kernel algorithm}, described in detail in Algorithm \ref{algo:kernel}:

\begin{enumerate}
 \item Given the matrix $A$, compute a small generating set of the toric ideal  $I_{A}$ following \cite{DiBiaseUrbanke1995}.
	\item For each $j\in\set{1,\ldots,N}$, apply Buchberger's algorithm to compute the Gröbner base with respect to $A$ and $<_{c_j}$.
	\item Use the correspondence $ (x^u-x^v) \longleftrightarrow u-v$ from \eqref{eq:defPhi} to obtain the test set with respect to $(A,c_j)$.
	\item For each $i,j\in\set{1,\ldots,N}$, apply the augmentation algorithm (Algorithm \ref{algo:augmentation}) to compute $Q(x_i,\xi_j)$, and hence the $(i,j)$-entry of the opportunity cost matrix.
\end{enumerate}

	
%

%
%
	
	\subsection{The Graver basis approach}
	\label{subsec:Graver}

In the previous section, computing the $N\times N$ opportunity cost matrix still required computing $N$ test sets via the Gröbner bases. This motivates the question whether it is possible to construct a single test set that not only covers all right-hand side vectors $b$ but also all cost vectors $c$. 

\begin{defn}
	A set $\t\subseteq $ is an \emph{universal test set} for the matrix $A$ if it contains a test set $\t_c$ for every cost vector $c$.
\end{defn}

Clearly, the Gröbner basis approach alone is insufficient for computing a universal test set, as this would require computing an infinite number of Gröbner bases. Instead, the Graver basis approach will be used. We begin by stating the definition of a Graver basis.


\begin{defn}
	Assume $a=(a_1,a_2,...,a_n), b=(b_1,b_2,...,b_n) \in \mathbb{Z}^{n}$. Then $a \sqsubseteq b$ if  $\forall i$:  $a_{i} b_{i} \geq 0 $ and $\left|a_{i}\right| \leq\left|b_{i}\right|$. 
\end{defn}

\begin{defn} Fix a integer matrix $A$. The Graver Basis of $A$ is the set  of  $\sqsubseteq$-minimal elements in  $\operatorname{Ker}_{\mathbb{Z}}(A): =\left\{u \in \mathbb{Z}^{n}: A \cdot u=0,u\neq 0\right\}$, denoted by $\Gamma_A$. 
\end{defn}
It is not immediately obvious that  $\Gamma_A$ is finite. However, finiteness of $\Gamma_A$ follows from the following result gathered from \cite{Hemmecke2003}.

\begin{prop}
	Let $\mathcal{G}_{A, c}$ be the reduced  Gröbner basis of $A$ with respect to $>c$. Then $$ \bigcup_{c \in \mathbb{Z}^{n}} \mathcal{G}_{A, c} \subseteq \mathcal{G} \mathcal{R}_{A} .$$
\end{prop}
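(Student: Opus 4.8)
The plan is to show that each element $g$ of a reduced Gröbner basis $\mathcal{G}_{A,c}$, viewed via $\phi$ as an element of $\ker_\zz(A)$, is $\sqsubseteq$-minimal among the nonzero elements of $\ker_\zz(A)$, hence lies in the Graver basis $\Gamma_A$; taking the union over $c$ then gives the claim. So fix $c\in\zz^n$ and $g\in\mathcal{G}_{A,c}$. Since a Gröbner basis is a test set we have $Ag=0$, and $g\ne 0$ because $c\cdot g>0$. By the unnumbered Proposition above, the binomial $\phi(g)=x^{g^+}-x^{g^-}$ is an element of the reduced Gröbner basis of the toric ideal $I_A$ with respect to $<_c$. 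Using that $<_c$ is invariant under translation of exponent vectors, one checks that $x^{g^+}=\init{<_c}{\phi(g)}$ (the inequality $c\cdot g^+-c\cdot g^-=c\cdot g\ge 0$ settles the generic case and the lexicographic tie-break the case of equality), and that $g^+$ and $g^-$ have disjoint support: otherwise $x^{g^+}$ would be a proper divisor of the leading monomial of the reduced basis binomial associated to $g$, hence would fail to be a minimal generator of $\init{<_c}{I_A}$, contradicting minimality of a reduced basis. This last point is the standard fact that reduced Gröbner bases of toric ideals consist of binomials of disjoint support (cf. Section 4 of \cite{Sturmfels-AMS}); in particular $g^+\ne 0$, since $\init{<_c}{I_A}$ is a proper ideal.

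Now suppose, for contradiction, that $g$ is not $\sqsubseteq$-minimal: there is $w\in\ker_\zz(A)$ with $w\ne 0$, $w\ne g$, and $w\sqsubseteq g$. Conformality forces $w^+\le g^+$ and $w^-\le g^-$ componentwise, and $\phi(w)=x^{w^+}-x^{w^-}$ is a nonzero element of $I_A$. I would then split on the initial monomial of $\phi(w)$ with respect to $<_c$. If $\init{<_c}{\phi(w)}=x^{w^-}$, then $x^{w^-}\in\init{<_c}{I_A}$, and since $x^{w^-}$ divides $x^{g^-}$, also $x^{g^-}\in\init{<_c}{I_A}$. If instead $\init{<_c}{\phi(w)}=x^{w^+}$, then $x^{w^+}\in\init{<_c}{I_A}$ divides the minimal generator $x^{g^+}$ of $\init{<_c}{I_A}$, which forces $w^+=g^+$; since $w\ne g$ this forces $w^-\ne g^-$, so $x^{w^-}$ properly divides $x^{g^-}$, and subtracting $\phi(g)$ from $\phi(w)=x^{g^+}-x^{w^-}$ produces the nonzero element $x^{g^-}-x^{w^-}$ of $I_A$, whose initial monomial is $x^{g^-}$ or $x^{w^-}$; in either case (the latter because $x^{w^-}$ divides $x^{g^-}$) one again gets $x^{g^-}\in\init{<_c}{I_A}$. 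So in all cases $x^{g^-}\in\init{<_c}{I_A}$. But $x^{g^-}$ is a monomial of the reduced basis element $\phi(g)$ that is not a multiple of $\init{<_c}{\phi(g)}=x^{g^+}$ (because $g^+\ne 0$ and $g^+,g^-$ have disjoint support), so $x^{g^-}$ would have to be a multiple of $\init{<_c}{\phi(g')}$ for some $g'\in\mathcal{G}_{A,c}$ with $g'\ne g$, contradicting reducedness. Hence no such $w$ exists, $g\in\Gamma_A$, and since $c$ and $g$ were arbitrary, $\bigcup_{c\in\zz^n}\mathcal{G}_{A,c}\subseteq\Gamma_A$.

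The only delicate point, and hence the main obstacle, is bookkeeping: keeping precise the dictionary between the combinatorial data (the conformal order $\sqsubseteq$, the positive and negative parts, and $\phi$) and the algebraic data (initial monomials, minimal generators of $\init{<_c}{I_A}$, and standard monomials of a reduced basis), and in particular verifying that $<_c$ is translation-invariant so that the comparisons of initial monomials above are legitimate. Within the case analysis, the one substep that needs more than a one-line appeal to reducedness is the case $\init{<_c}{\phi(w)}=x^{w^+}$, where one must first deduce $w^+=g^+$ from minimality of the generator $x^{g^+}$ and then subtract the two binomials in order to exhibit $x^{g^-}$ inside the initial ideal.
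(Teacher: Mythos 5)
Your argument is correct, but note that the paper itself gives no proof of this proposition: it is quoted from \cite{Hemmecke2003} and used only to conclude finiteness of $\Gamma_A$. What you have written is essentially the standard argument (cf.\ Lemma 4.6 of \cite{Sturmfels-AMS}) that every element of a reduced Gr\"obner basis of the toric ideal $I_A$ is a primitive binomial, together with the observation that primitivity of $x^{g^+}-x^{g^-}$ is exactly $\sqsubseteq$-minimality of $g$ in $\ker_{\zz}(A)\setminus\{0\}$; the case analysis on $\init{<_c}{\phi(w)}$, the use of minimality of the generator $x^{g^+}$ to force $w^+=g^+$, and the subtraction producing $x^{g^-}-x^{w^-}$ are all sound, and the final contradiction correctly uses only the paper's definition of reducedness plus the fact that a monomial lies in $\init{<_c}{I_A}$ iff it is divisible by some $\init{<_c}{\phi(g')}$. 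What your route buys is a self-contained verification inside the paper's own framework, needing only the unnumbered proposition identifying $\mathcal{G}_{A,c}$ with the reduced Gr\"obner basis of $I_A$ under $<_c$. Two small tidy-ups: the disjointness of the supports of $g^+$ and $g^-$ is automatic from the definition of positive and negative parts, so your divisibility argument for it is unnecessary; and the claim $g^+\neq 0$ is better justified by the standing assumption that the integer programs $\ip{c}{b}$ are bounded (equivalently $\ker(A)\cap\nonnegint^{d}=\{0\}$, so no nonzero kernel element is sign-constant) than by properness of $\init{<_c}{I_A}$, since for arbitrary $c\in\zz^{n}$ the order $>_c$ need not be a term order and the initial ideal of a proper ideal could otherwise be the unit ideal.
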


Next, we state and prove a result about the relationship between the Graver Basis of the stochastic integer program \ref{eq:SIP} and that of the deterministic integer program obtained by choosing a single scenario and assigning probability $1$ to it:
\begin{equation}
	\label{eq:SIP-onescen}
	\min \left\{c^{\top} x+ p q^{\top} y_{}: A x = b, x  \in \mathbb{Z}_{+}^{m}, T x+W_{} y_{}=h_{}, y_{}  \in \mathbb{Z}_{+}^{n} \right\}
\end{equation}
Then we assert a new result, which connects the Graver Basis of the SIP problem and the Graver Basis of the IP problem:

\begin{thm} Denote the Graver Basis of the SIP problem (5) by $\Gamma_{N}$, and that of the IP problem \eqref{eq:SIP-onescen} by $\Gamma_{1}$  for  $i=1,2,...,N$. If $\operatorname{Ker}_{\mathbb{R}}(A)=0$, we have the following relationship $$\Gamma_{N}=\{(0,0,...,v_i,...,0):(0,v_i)\in \Gamma_{1},i=1,2,...,n\}$$\end{thm}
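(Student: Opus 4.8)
The plan is to reduce the statement to the elementary fact that the Graver basis of a block-diagonal matrix $\operatorname{diag}(W,\dots,W)$ ($N$ copies) is the disjoint union of the $N$ embedded copies of the Graver basis of $W$, the hypothesis $\operatorname{Ker}_{\mathbb{R}}(A)=0$ being precisely what makes this reduction possible.

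\emph{Step 1: the constraint matrices and their integer kernels.} Write $A$ for the first-stage matrix and $W$ for the recourse matrix (the matrices called $A$ and $W$ in \eqref{eq:SIP-onescen}; in \eqref{eq:SIP} the recourse matrix is also written $A$, which we reconcile), and order the variables of the $N$-scenario problem as $(x,y_1,\dots,y_N)$. Its constraint matrix is then
\[
  \mathcal{M}_N=\begin{pmatrix} A & & & \\ T & W & & \\ \vdots & & \ddots & \\ T & & & W\end{pmatrix},
\]
and $\mathcal{M}_1$ (the matrix of \eqref{eq:SIP-onescen}) is the $N=1$ case. A vector $(u,w_1,\dots,w_N)$ lies in $\operatorname{Ker}_{\mathbb{Z}}(\mathcal{M}_N)$ iff $Au=0$ and $Tu+Ww_i=0$ for every $i$. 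Since $u\in\operatorname{Ker}_{\mathbb{Z}}(A)\subseteq\operatorname{Ker}_{\mathbb{R}}(A)=\{0\}$, necessarily $u=0$, and the scenario constraints collapse to $Ww_i=0$; hence
\[
  \operatorname{Ker}_{\mathbb{Z}}(\mathcal{M}_N)=\{0\}\times\operatorname{Ker}_{\mathbb{Z}}(W)^{N},\qquad
  \operatorname{Ker}_{\mathbb{Z}}(\mathcal{M}_1)=\{0\}\times\operatorname{Ker}_{\mathbb{Z}}(W).
\]
In particular every element of $\Gamma_1$ has the form $(0,v)$ with $v$ a $\sqsubseteq$-minimal element of $\operatorname{Ker}_{\mathbb{Z}}(W)\setminus\{0\}$, and conversely.

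\emph{Step 2: $\sqsubseteq$-minimality in the product lattice.} Put $L=\{0\}\times\operatorname{Ker}_{\mathbb{Z}}(W)^{N}$, so $\Gamma_N$ is the set of $\sqsubseteq$-minimal nonzero elements of $L$. If $z=(0,w_1,\dots,w_N)\in L$ has two nonzero blocks $w_i,w_j$, then setting $w_j:=0$ produces $z'\in L$ with $0\neq z'\sqsubseteq z$ and $z'\neq z$ (zeroing a coordinate neither increases its absolute value nor breaks the sign condition), so $z\notin\Gamma_N$; thus each element of $\Gamma_N$ has exactly one nonzero block. Conversely, for $z=(0,\dots,0,v,0,\dots,0)$ with $v$ in block $i$, any $z'=(0,w_1,\dots,w_N)\sqsubseteq z$ in $L$ must vanish outside block $i$ and satisfy $w_i\sqsubseteq v$; since the restriction of $\sqsubseteq$ to a single block is just $\sqsubseteq$ on $\mathbb{Z}^{n}$, such a $z$ is $\sqsubseteq$-minimal in $L$ iff $v$ is $\sqsubseteq$-minimal in $\operatorname{Ker}_{\mathbb{Z}}(W)\setminus\{0\}$, i.e. iff $(0,v)\in\Gamma_1$. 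Combining the two observations gives exactly $\Gamma_N=\{(0,\dots,0,v,0,\dots,0):(0,v)\in\Gamma_1\}$, the $N$ embeddings of $\Gamma_1$ into the scenario blocks.

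\emph{The main obstacle.} There is no real technical difficulty past Step 1; the crux is recognising that $\operatorname{Ker}_{\mathbb{R}}(A)=0$ forces the first-stage component of every kernel vector to be zero, which decouples the scenario blocks and turns $\mathcal{M}_N$, on its kernel, into the block-diagonal matrix $\operatorname{diag}(W,\dots,W)$ — and that without this hypothesis the statement fails, since Graver elements with a common nonzero $x$-part linking several scenarios then occur. The remaining care is purely bookkeeping: keeping the notation of \eqref{eq:SIP} and \eqref{eq:SIP-onescen} consistent, and noting that zeroing coordinates and restricting to a block are compatible with $\sqsubseteq$ (immediate from the definition of $\sqsubseteq$).
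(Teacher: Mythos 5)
Your proof is correct and follows essentially the same route as the paper: write down the block constraint matrices $A_N$ and $A_1$, use $\operatorname{Ker}_{\mathbb{R}}(A)=0$ to force the first-stage component of every kernel vector to vanish so that $\operatorname{Ker}_{\mathbb{Z}}(A_N)$ decomposes blockwise, and then read off the Graver basis from $\sqsubseteq$-minimality. Your Step 2 merely spells out the minimality argument that the paper compresses into ``from the definition of the Graver basis, we can arrive at the conclusion,'' which is a welcome addition rather than a deviation.
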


\begin{proof}
	The corresponding coefficient matrix of \eqref{eq:SIP} is of the form
	$$
	A_N:=\left(\begin{array}{ccccc}
		A & 0 & 0 & \cdots & 0 \\
		T & W & 0 & \cdots & 0 \\
		T & 0 & W & \cdots & 0 \\
		\vdots & \vdots & \vdots & \ddots & \vdots \\
		T & 0 & 0 & \cdots & W
	\end{array}\right)
	$$
	and  the corresponding coefficient matrix of (6) is of the form 
	$$
	A_{1}:=\left(\begin{array}{cc}
		A & 0  \\
		T & W  \\
	\end{array}\right).
	$$
	Since $\operatorname{Ker}_{\mathbb{R}}(A)=0$, we have $\operatorname{Ker}_{\mathbb{Z}}(A_N)=(0,v_1,v_2,...,v_n):(0,v_i)\in \operatorname{Ker}_{\mathbb{Z}}(A_1) \}$. From the definition of the Graver Basis, we can arrive at the conclusion.
\end{proof}

	\section{Algorithms}

\label{sec:algos}

	In this section, we describe numerical algorithms that follow from the mathematical development of the previous section. We first introduce the augmentation algorithm that computes an optimal solutions to $\ip cb$ from a test set $\t_c$ and a feasible solution $z_0$.

\begin{algorithm}[!ht]
	\caption{The augmentation algorithm}\label{algo:augmentation}
	\begin{algorithmic}
		\Require Cost vector $c$, right-hand side vector $b$
		\Require Matrix $A$
		\Require Test set $\t_c$
		\Require $z_0$ such that $Az_0=b$.
		\While{$\exists \, t\in \t_c$ such that $z_0-t$ is feasible}
		\State $z_0 \gets z_0-t$
		\EndWhile
		
		\Return Optimal solution $z_0=\argmin\set{c^T x\colon  Ax=b}$
	\end{algorithmic}
\end{algorithm}


\begin{algorithm}[!h]
	\caption{Algorithm to compute a small generating set of the Toric ideal \cite{DiBiaseUrbanke1995}}
	\label{algo:genset-toric}		
	\begin{algorithmic}
		\Require Integer matrix $A$
		\State Calculate a $\mathbb{Z}-$basis $K$ for $\operatorname{ker} (\pi_{*})$
		\State Find an equivalent basis  $K^{\prime}$  such that all rows of  $K^{\prime}$  lie in the same orthant.
		\State  $J \gets$  index set of all columns with negative entries and let  $K_{J}^{\prime}$  be the matrix obtained from  $K^{\prime}$  by reversing the signs of the columns indexed by  $J$. 
		\State $G_{J}\gets \varphi\left(K_{J}^{\prime}\right)$.
		\While{$J\ne \varnothing$}
		\State Take  $j \in J$  and let  $G_{J \setminus\set{j}}$  be the result of $T_{j}$ operating on the reduced Gröbner basis for  $\langle G_{J} \rangle$ with respect to a term order that eliminates  $x_{j}$ 
		\State $J \gets   J \setminus\set{j}$
		\EndWhile
		
		\Return The generating set $G_{\varnothing}$ of $I_A$.
	\end{algorithmic}
\end{algorithm}

For details about Algorithm \ref{algo:genset-toric} see \cite{DiBiaseUrbanke1995}.

The kernel algorithm takes as input the matrix $A$, a sequence of cost vectors $c$ and an array of right-hand side vectors $b$ and computes the corresponding opportunity cost matrix, by first applying Algorithm \ref{algo:genset-toric} and then repeatedly the Buchberger algorithm. In this way, a Gröbner basis for $A$ is computed with respect to each $>_c$. The augmentation algorithm \ref{algo:augmentation} then gives the entries of the opportunity cost matrix for every $c$ and every $b$.

\begin{algorithm}[!h]
	\caption{The Kernel Algorithm}
	\label{algo:kernel}
	\begin{algorithmic}
		\Require Cost vectors $\set{c_1,\ldots,c_N}$
		\Require Right-hand side vectors $\set{b_{i,j}}_{i,j=1}^N$
		\Require Matrix $A$
		\State Compute a generating set $\set{f_1,\ldots,f_r}$ of the Toric ideal using Algorithm \ref{algo:genset-toric}
		\State $i\gets 1$
		\While{$i\leq N$}
		\State Compute the Gröbner basis $\g_i$ of $(A,>_{c_i})$ via the Buchberger algorithm 
		\State $j\gets 1$
		\While{$j\leq N$}
		\State Compute $\a_{i,j}=\ip{c_i}{b_{i,j}}$ by inputting $\g_i$ into the Augmentation Algorithm \ref{algo:augmentation}
		\State $j\gets j+1$
		\EndWhile
			%
			\State $i\gets i+1$
			\EndWhile
		
		\Return The opportunity cost matrix $\rb{\a_{i,j}}_{i,j=1}^N$	
	\end{algorithmic}
\end{algorithm}

For the Graver basis approach, we apply the modified augmentation algorithm \ref{algo:augmentation-graver} for a family of integer problems of the form $\min\set{c^\top x\colon Ax=b}$ as both $b$ and $c$ vary.

\begin{algorithm} 
	\caption{Graver Basis method}
	\label{algo:augmentation-graver}
	\begin{algorithmic}
		\Require Cost vectors $\set{c_1,\ldots,c_N}$
		\Require Right-hand side vectors $\set{b_{i,j}}_{i,j=1}^N$
		\State Compute the Graver Basis $\Gamma_A$   of  $A$
		\Require 
		\State $i\gets 1$
		\While{$i\leq N$}
		\State $j\gets 1$
		\While{$j\leq N$}
		\State Compute a feasible solution $z_{i,j}$ to $Ax=b_{i,j}$
		\While{$\exists\, t \in \Gamma_A\colon c_i^{\top} t>0$ and s.t. that $z_{0}-t$ is feasible}
		\State  $z_{i,j}:=z_{i,j}-t$
		\EndWhile
		\State Compute $\a_{i,j}=c_j^\top  z_{i,j}$
		\EndWhile
		\EndWhile
		
		\Return 		The opportunity cost matrix $\a$ 
	\end{algorithmic}
\end{algorithm}

\ \\

\vfill\eject

\section{Preliminary results}
\label{sec:results}

In this section we test our approach on some numerical examples. The first is a well-known purely combinatorial stochastic program  introduced in \cite{HemmeckeSchultz2003}. The second is a stochastic network design problem. 

\subsection{The example of \cite{HemmeckeSchultz2003}}
\label{subsec:HS2003}

In  \cite{HemmeckeSchultz2003}, the following stochastic integer program was introduced and studied.

\begin{align}
	\label{eq:HemmeckeSchultz-original}
	\min & \left\{35 x_{1}+40 x_{2}+\frac{1}{N} \sum_{\nu=1}^{N} 16 y_{1}^{\nu}+19 y_{2}^{\nu}+47 y_{3}^{\nu}+54 y_{4}^{\nu}\right\} \\
	\notag
	\text{s.t. }  & x_{1}+y_{1}^{\nu}+y_{3}^{\nu}  \geq \xi_{1}^{\nu} \\
	\notag
	& x_{2}+y_{2}^{\nu}+y_{4}^{\nu}  \geq \xi_{2}^{\nu} \\
	\notag
	& 2 y_{1}^{\nu}+y_{2}^{\nu}  \leq \xi_{3}^{\nu} \\
	\notag
	& y_{1}^{\nu}+2 y_{2}^{\nu}  \leq \xi_{4}^{\nu}, \\
	\notag
	& x_{1}, x_{2}, y_{1}^{\nu}, y_{2}^{\nu}, y_{3}^{\nu}, y_{4}^{\nu}  \in \mathbb{Z}_{+}
\end{align}

As in \cite{HemmeckeSchultz2003} the scenarios $\xi^\nu$ are chosen uniformly from the four-dimensional squares $ [300,12000] \times [300,12000]\times [200,12000] \times[200,12000]$. 
In order to bring the problem in the form \eqref{eq:TrueStochOptProblem}, we introduce slack variables $u_j^\nu$ for $j\in\set{1,2,3,4}$ and obtain

\begin{align}
	\label{eq:HemmeckeSchultz-reformulated}
	\min & \left\{\frac{1}{N} \sum_{\nu=1}^{N} 35 x_{1}+40 x_{2}+ 16 y_{1}^{\nu}+19 y_{2}^{\nu}+47 y_{3}^{\nu}+54 y_{4}^{\nu}\right\} \\
	\notag
	\text{s.t. }  & x_{1}+y_{1}^{\nu}+y_{3}^{\nu} -u_1^{\nu} = \xi_{1}^{\nu} \\
	\notag
	& x_{2}+y_{2}^{\nu}+y_{4}^{\nu} -u_2^{\nu}  = \xi_{2}^{\nu} \\
	\notag
	& 2 y_{1}^{\nu}+y_{2}^{\nu} +u_3^{\nu} = \xi_{3}^{\nu} \\
	\notag
	& y_{1}^{\nu}+2 y_{2}^{\nu}  + u_4^{\nu} =  \xi_{4}^{\nu}, \\
	\notag
	& x_{1}, x_{2}, y_{1}^{\nu}, y_{2}^{\nu}, y_{3}^{\nu}, y_{4}^{\nu}, u_{1}^{\nu}, u_{2}^{\nu}, u_{3}^{\nu}, u_{4}^{\nu} \in \mathbb{Z}_{+}.
\end{align}

In order to test our approach, we calculated the seconds of CPU time used by the programs on the Apple Silicon M1 chip. We observe that it is easy to read off a  feasible solution to the problem \eqref{eq:HemmeckeSchultz-reformulated}
\begin{align}
	\label{eq:feasible-HS}
	x_{1}=x_{2}=y_{1}^{\nu}=y_{2}^{\nu}=0, y_{3}^{\nu}=\xi_{1}^{\nu}, y_{4}^{\nu}=\xi_{2}^{\nu}\quad (\nu=1, \ldots N)
\end{align}


The task is to compare the computational costs of the opportunity-cost matrix in the SIP problem in the last section. In other words, we compute a family of integer programming problems with three algorithms. We test the program's running time in CPU seconds on an Apple Silicon M1 chip with Python's $\texttt{time}$ package. Also, we introduce the modern MINLP solver, specially designed for mixed-integer programming problems, implemented by Python's $\texttt{gekko}$ package. We shall compare the computational costs of  our algorithms \ref{algo:kernel} and \ref{algo:augmentation-graver} with the MINLP solver in two cases. One is when we fix the sub-problem, and the other is when we fix the number of scenarios.

We shall compute the computational time when the quantity of scenarios equals 1, 20, 40, 60, 80, 100, 120, 140, 160, 180, and 200. We gather the numerical results in the table below.

\begin{table}[htbp]
	\centering
	\caption{Comparison between different solvers ( Timing units: seconds )}
	\begin{tabular}{|c|c|c|c|}
		\hline
		Scenario Number  & Kernel Method & MINLP solver & Graver Method \\
		\hline
		1 & 0.20098  & 11.33629  & 131.06186   \\
		20 & 1.21699  & 219.18796 & 131.66931   \\
		40 & 2.33655  & 439.71981 & 132.02384   \\
		60 & 3.53545  & 643.3165  & 132.35069   \\
		80 & 4.68832  & 994.41339     & 132.72208  \\
		100 & 5.93403  & 1250.31321   & 133.14721   \\
		120 & 7.24906  & 1508.7674  & 133.61075  \\
		140 & 8.56163  & 1803.57992     & 134.06991  \\
		160 & 10.08377 & 2075.8631  & 134.61703   \\
		180 & 11.25169 & 2333.33849 & 135.21214  \\
		200 & 12.88583 & 2584.11191 & 135.8798  \\
		\hline
	\end{tabular}
\end{table}

It is remarkable that it only took about 0.1 seconds to compute the generating set of the toric ideal of $A$, essential for the Gröbner Bases computation in all scenarios. By contrast, it took approximately 130 seconds to compute the Graver Basis of $A$. Also we visualize the table in a line chart below. The horizontal axis represents the increasing quantity of scenarios, and the vertical axis represents the running time. The three curves with different colors in the picture represent three different methods.

\begin{figure}[h]
	\centering
	\includegraphics[scale=0.5]{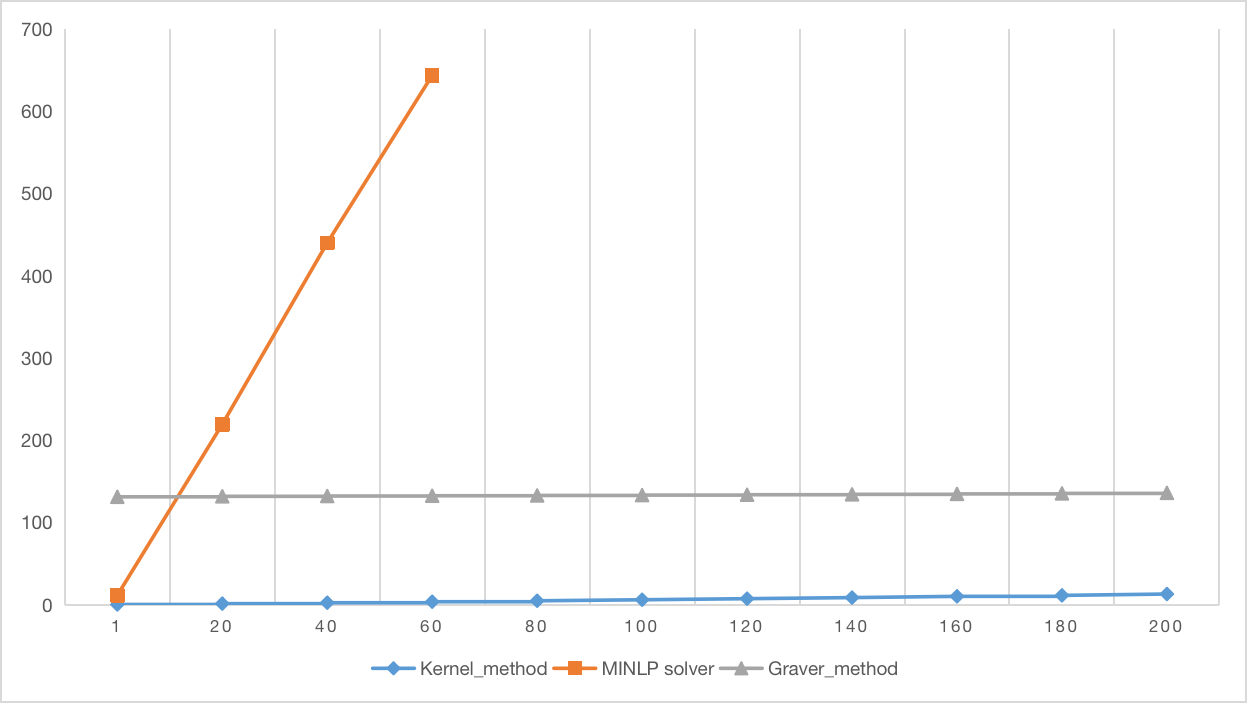}
	\caption{Trends of Computational Costs when the Sub-problem is Fixed}
	\label{fig:scen-var-variables-fixed}
\end{figure}

Figure \ref{fig:scen-var-variables-fixed} shows that Algorithm  \ref{algo:augmentation-graver} took almost 130 seconds to initialize and then the curve rises slowly. The computational cost of the MINLP solver grows extremely fast and starts to overpower the Algorithm   \ref{algo:augmentation-graver} when $N=20$. Generally, the curve of Algorithm \ref{algo:kernel} shows superior performance when compared with the other two algorithms.
%

Next, we compare when the complexity of the sub-problems increases. Below is the graph showing the trends of computational costs when the size of scenarios is fixed to 200 but the number of scenarios is increased. In Figure \ref{fig:scen-fixed-variables-var}, with a fixed 200 scenarios, var*$x$ represents $x$  variables in the particular programming problem. As can be seen, by comparing the curves, the Gröbner Basis approach (the blue curve) is overall superior to the Graver Basis approach (the grey curve) and the traditional MINLP solver (the yellow curve). Also, regarding details, the Graver Basis method begins to suffer from the curse of dimensionality, starting from the problem's complexity being 20 variables, and so is the Gröbner Basis method but more dramatically. However, the MINLP solver does not reflect the apparent complexity when it suffers from the curse. Generally, the MINLP solver applies to situations where the sub-problem is large, and the number of scenarios is small. In contrast, the Gröbner Basis applies to situations where the sub-problem has little complexity, and the number of scenarios is significant.

\begin{figure}[h]
	\centering
	\includegraphics[scale=0.5]{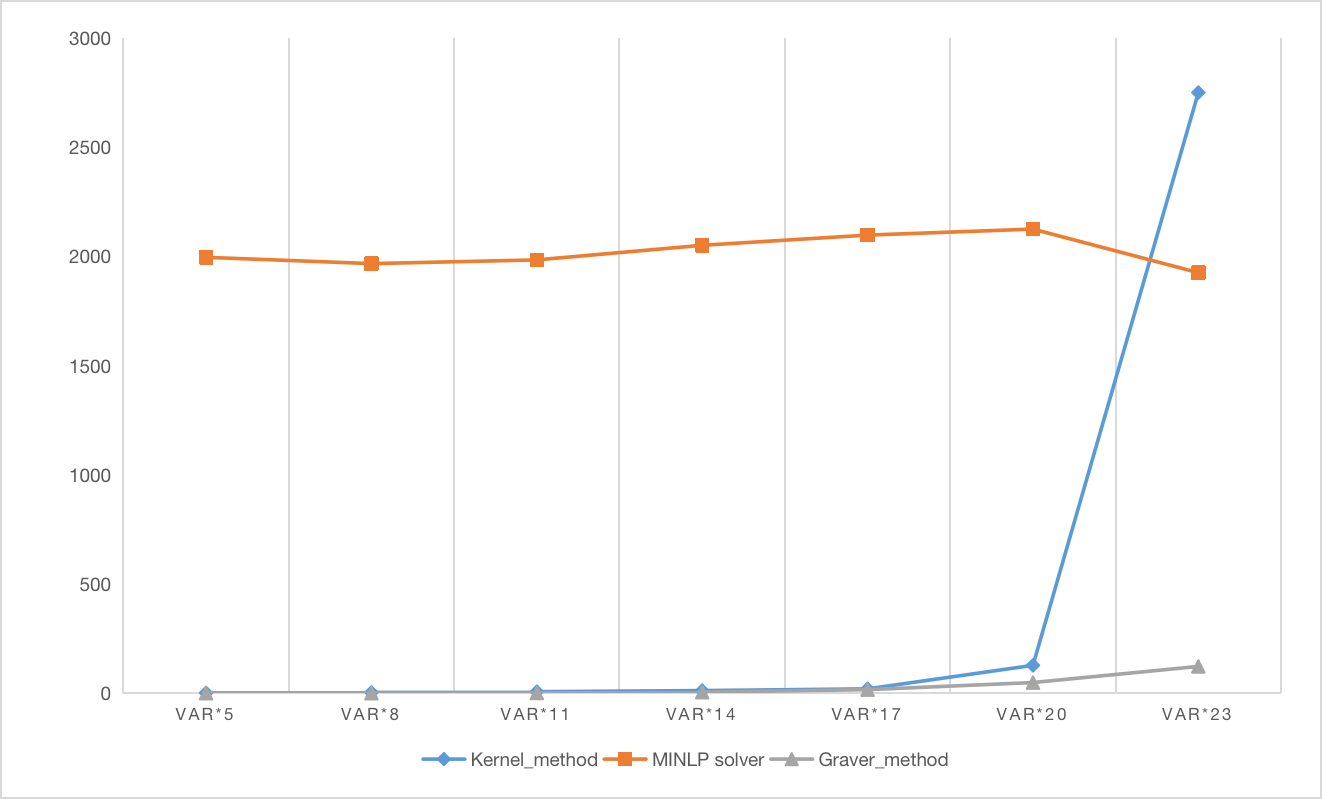}
	\caption{Trends of Computational Costs when the Quantity of Scenarios is Fixed}
	\label{fig:scen-fixed-variables-var}
\end{figure}

We have yet to reflect one thing in the experiments. By testing our algorithm hundreds of times, we have come to the empirical conclusion that the time increase of our proposed algorithm in computing Grobner Basis will be dramatic for $N > 15$. And there will be geometric growth for the computational time regarding Graver Basis when the complexity is between 10 and 15 variables. Also, surprisingly, we can no longer compute the Gröbner Basis after 15 variables' complexity. We refer to this phenomenon as the curse of dimensionality in computational algebraic geometry regarding SIP problems. We conjecture that the crux is that the number of ideal minimal generating elements will increase dramatically as the dimension increases, thus making Bunchberger's algorithm computationally costly.

Finally, the crux of the problem, that two algebraic methods don't function when the variables' number exceeds 15, is the computation of the Gröbner Basis of some specific ideals, specifically the toric ideal of the coefficient matrix $A$, unavoidable in each of the two algorithms. We have tried many applications specialized in algebraic calculations, for example, SageMath, Macaulay2, etc. Still, when the variables' number exceeds 15, usually the exponential of the generating elements of the polynomial ideal, which is required in the algorithm, is too great to be computable. For example, the computation of the Gröbner Basis of the matrix $A=(B, I)$ where $I$ denotes the identity matrix and 
$$
B = \begin{pmatrix}
	1 & 1 & 1 & 1 & 1  & 1  & 1  & 1  & 1  & 1  \\
	1 & 2 & 3 & 4 & 5  & 6  & 7  & 8  & 9  & 10 \\
	2 & 3 & 4 & 5 & 6  & 7  & 8  & 9  & 10 & 11 \\
	3 & 4 & 5 & 6 & 7  & 8  & 9  & 10 & 11 & 12 \\
	4 & 5 & 6 & 7 & 8  & 9  & 10 & 11 & 12 & 13 \\
	5 & 6 & 7 & 8 & 9  & 10 & 11 & 12 & 13 & 14 \\
	6 & 7 & 8 & 9 & 10 & 11 & 12 & 13 & 14 & 15 \\
\end{pmatrix}
$$

involves the computation of the Gröbner Basis of the toric ideal below\\

\noindent $\text{ideal}(x_{8}^2-x_{7}*x_{9}, x_{7}*x_{8}-x_{6}*x_{9}, x_{6}*x_{8}-x_{5}*x_{9}, x_{5}*x_{8}-x_{4}*x_{9}, x_{4}*x_{8}-x_{3}*x_{9}, x_{3}*x_{8}-x_{2}*x_{9}, x_{2}*x_{8}-x_{1}*x_{9}, x_{1}*x_{8}-x_{0}*x_{9},  x_{7}^2-x_{5}*x_{9}, x_{6}*x_{7}-x_{4}*x_{9}, x_{5}*x_{7}-x_{3}*x_{9}, x_{4}*x_{7}-x_{2}*x_{9}, x_{3}*x_{7}-x_{1}*x_{9}, x_{2}*x_{7}-x_{0}*x_{9}, x_{1}*x_{7}-x_{0}*x_{8}, x_{6}^2-x_{3}*x_{9}, x_{5}*x_{6}-x_{2}*x_{9}, x_{4}*x_{6}-x_{1}*x_{9}, x_{3}*x_{6}-x_{0}*x_{9}, x_{2}*x_{6}-x_{0}*x_{8}, x_{1}*x_{6}-x_{0}*x_{7},  x_{5}^2-x_{1}*x_{9}, x_{4}*x_{5}-x_{0}*x_{9}, x_{3}*x_{5}-x_{0}*x_{8}, x_{2}*x_{5}-x_{0}*x_{7}, x_{1}*x_{5}-x_{0}*x_{6}, x_{4}^2-x_{0}*x_{8}, x_{3}*x_{4}-x_{0}*x_{7}, x_{2}*x_{4}-x_{0}*x_{6}, x_{1}*x_{4}-x_{0}*x_{5}, x_{3}^2-x_{0}*x_{6}, x_{2}*x_{3}-x_{0}*x_{5}, x_{1}*x_{3}-x_{0}*x_{4}, x_{2}^2-x_{0}*x_{4}, x_{1}*x_{2}-x_{0}*x_{3}, x_{1}^2-x_{0}*x_{2},  x_{8}*x_{11}*x_{12}*x_{13}*x_{14}*x_{15}*x_{16}-x_{9}, x_{7}*x_{11}*x_{12}*x_{13}*x_{14}*x_{15}*x_{16}-x_{8}, x_{6}*x_{11}*x_{12}*x_{13}*x_{14}*x_{15}*x_{16}-x_{7},
x_{5}*x_{11}*x_{12}*x_{13}*x_{14}*x_{15}*x_{16}-x_{6}, x_{4}*x_{11}*x_{12}*x_{13}*x_{14}*x_{15}*x_{16}-x_{5},
x_{3}*x_{11}*x_{12}*x_{13}*x_{14}*x_{15}*x_{16}-x_{4}, x_{2}*x_{11}*x_{12}*x_{13}*x_{14}*x_{15}*x_{16}-x_{3},
x_{1}*x_{11}*x_{12}*x_{13}*x_{14}*x_{15}*x_{16}-x_{2}, x_{0}*x_{11}*x_{12}*x_{13}*x_{14}*x_{15}*x_{16}-x_{1},
x_{9}*x_{10}*x_{14}*x_{15}^2*x_{16}^3-x_{0}*x_{6}*x_{11}^2* x_{12}, x_{8}*x_{10}*x_{14}*x_{15}^2*x_{16}^3-x_{0}*x_{5}*x_{11}^2* x_{12}, x_{7}*x_{10}*x_{14}*x_{15}^2*x_{16}^3-x_{0}*x_{4}*x_{11}^2* x_{12},
x_{6}*x_{10}*x_{14}*x_{15}^2*x_{16}^3-x_{0}*x_{3}*x_{11}^2* x_{12}, x_{5}*x_{10}*x_{14}*x_{15}^2*x_{16}^3-x_{0}*x_{2}*x_{11}^2* x_{12}, x_{4}*x_{10}*x_{14}*x_{15}^2*x_{16}^3-x_{0}*x_{1}*x_{11}^2* x_{12},
x_{3}*x_{10}*x_{14}*x_{15}^2*x_{16}^3-x_{0}^2*x_{11}^2* x_{12}, x_{0}*x_{5}*x_{11}^3*x_{12}^2*x_{13}-x_{9}*x_{10}*x_{15}*x_{16}^2, x_{0}*x_{4}*x_{11}^3*x_{12}^2*x_{13}-x_{8}*x_{10}*x_{15}*x_{16}^2,
x_{0}*x_{3}*x_{11}^3*x_{12}^2*x_{13}-x_{7}*x_{10}*x_{15}*x_{16}^2, x_{0}*x_{2}*x_{11}^3*x_{12}^2*x_{13}-x_{6}*x_{10}*x_{15}*x_{16}^2, x_{0}*x_{1}*x_{11}^3*x_{12}^2*x_{13}-x_{5}*x_{10}*x_{15}*x_{16}^2,
x_{0}^2*x_{11}^3*x_{12}^2*x_{13}-x_{4}*x_{10}*x_{15}*x_{16}^2, x_{2}*x_{10}*x_{13}*x_{14}^2*x_{15}^3* x_{16}^4-x_{0}^2*x_{11}, x_{1}*x_{10}*x_{12}*x_{13}^2*x_{14}^3*x_{15}^4* x_{16}^5-x_{0}^2,
x_{10}*x_{11}*x_{12}^2*x_{13}^3*x_{14}^4*x_{15}^5* x_{16}^6-x_{0}),$
\\

In future work, we will study how the situation can be improved by  optimising the algorithm for computation of Gröbner bases, for example, the Faugère $F_4$ \cite{FaugereF4} and $F_5$ \cite{FaugereF5} algorithms.

\subsection{Stochastic network design}
\label{subsec:SND}

We have also tested our approach on a small stochastic network design problem, formulated as follows:
\begin{align}
	\label{eq:StochND}
	\min & \sum_{a \in A} c_{a} x_{a}+\frac{1}{N} \sum_{i=1}^{N} \sum_{c \in C} \sum_{a \in A} q_{a c} y_{a c}^{i}  \\
	\text { s.t. }  & \sum_{\substack{a \in A \\ a(0)=v}} y_{a c}^{i}-\sum_{\substack{a \in A \\ a(1)=v}} y_{a c}^{i}=d_{v, c}^{i} \quad  \forall(v, c, i) \in V \times C \times\{1, \ldots, N\}  \\
	& \sum_{c \in C} y_{a c}^{i} \leq u_{a} x_{a} \quad  \forall(a, i) \in A \times\{1, \ldots, N\}\\
	& x_{a} \in\{0,1\}, y_{a c}^{i} \in \nonnegint
\end{align}

Following the same procedure as in Section \ref{subsec:HS2003}, we compare our two algorithms with a commercial solver. Our instances concern a small problem, with three vertices and three arcs. Figure \ref{fig:performance-SND} shows the time taken to compute the opportunity cost matrix for each approach.
\begin{figure}[h]
	\centering
	\includegraphics[scale=0.7]{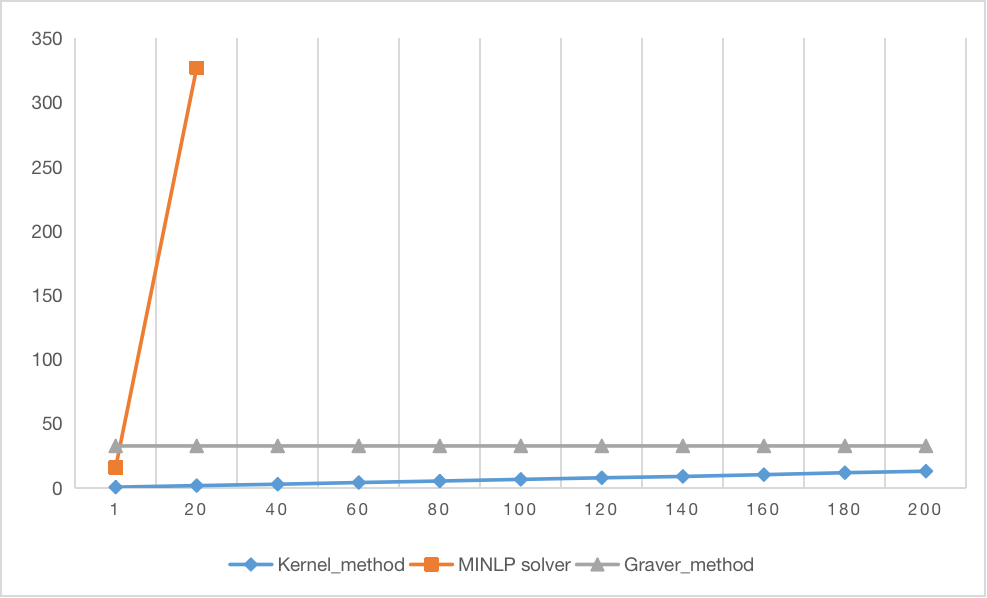}
	\caption{Trends of Computational Costs when the Sub-problem is Fixed  (Stochastic Network Design Problem)}
	\label{fig:performance-SND}
\end{figure}
We once more observe the superior efficiency of the kernel approach when the number of decision variables is small.


\section{Conclusion}
\label{sec:conclusion}

In this paper, we have presented new algorithms for computing the opportunity cost matrix of \cite{HewittOrtmannRei2022} using algebraic methods using Gröbner and Graver bases. We have also provided mathematical results showing the relationship of Graver bases for deterministic and stochastic integer programs. Our numerical results show that the Gröbner basis is very efficient as the number of scenarios increases if the number of variables of the underlying integer program does not grow too fast. On the other hand, the Graver basis approach is interesting when there are a large number of variables.

In future work, we propose to include more recent methods of computing Gröbner basis, such as the Faugère algorithms. We will also study how to combine the algebraic approach with approximations such as Lagrangian cuts in order to increase further the number of decision variables that our approach can handle.
	
	\vspace{2cm}
	
	\printbibliography

	\ \ 

\end{document}